\documentclass[11pt]{amsart}
\usepackage[centertags]{amsmath}

\usepackage{amsfonts}
\usepackage{amssymb}
\usepackage{amsthm}
\usepackage{newlfont}

\newcommand{\ber}{{\mathbb R}}

\newcommand{\emp}{\emptyset}
\newcommand{\cl}{c\ell}
\newcommand{\bigo}{\hbox{\bf O}}

\theoremstyle{plain}
\newtheorem{thm}{Theorem}[section]
\newtheorem{cor}[thm]{Corollary}

\newtheorem{prop}[thm]{Proposition}
\newtheorem{ex}[thm]{Example}

\theoremstyle{definition}
\newtheorem{defn}{Definition}[section]
\theoremstyle{remark}

\numberwithin{equation}{section}

\date{\today}

\begin{document}

\vspace{0.5in}

\title{Centrally Image partition Regularity near 0 }

\author{Tanushree Biswas}
\thanks{The first named author thanks University of Kalyani for support towards
her Ph.D programme.}
\address{Department of Mathematics, University of Kalyani, Kalyani-741235,
West Bengal, India}
\email{tanushreebiswas87@gmail.com}

\author{Dibyendu De}
\thanks{The second named author is partially supported by DST-PURSE programme.}
\address{Department of Mathematics, University of Kalyani, Kalyani-741235,
West Bengal, India}
\email{dibyendude@klyuniv.ac.in}

\author{Ram Krishna Paul}
\address{Department of Mathematics, Nagaland University, Lumami-798627, Nagaland,  India}
\email{rmkpaul@gmail.com}

\begin{abstract}
The notion of Image partition regularity near zero was first introduced
by De and Hindman. It was shown there that like image partition regularity
over $\mathbb{N}$ the main source of infinite image partition regular
matrices near zero are Milliken- Taylor matrices. But Milliken- Taylor matrices are far apart to
have images in central sets. In this regard the notion of centrally
image partition regularity was introduced. In the present paper we
propose the notion centrally partition regular matrices near zero for dense sub semigroup of $(\ber^+,+)$
which are different from centrally partition regular matrices unlike
finite cases. 

\end{abstract}

\maketitle

\section{Introduction}

It is well known that for finite matrices image partition regularity behaves well with respect to central subsets of the underlying semigroup (Central sets were introduced by Furstenberg \cite{refF} and enjoy very strong combinatorial properties \cite[Proposition 8.21]{refF}). But the situation becomes totally different for infinite image partition regular matrices. It was shown in \cite{refHLSc} that some of very interesting properties for finite image partition regularity could not be generalized for infinite image partition regular matrices. To handle these situations the notion of centrally  image partition regular matrices were introduced \cite{refHLSc}, while both these notions becomes identical for finite matrices. The same problem occurs in the setup of image partition regularity near zero over dense subsemigroup of $((0,\infty),+)$ which is stronger notion than image partition regularity. Again image partition regularity and image partition regularity near zero over dense subsemigroup of $((0,\infty),+)$ becomes identical for finite matrices. Also finite image partition regular matrices have images in any central sets as well as central set near zero for some nice dense subsemigroups of $((0,\infty),+)$. This situation motivates us to introduce the notion  of  {\em centrally image partition regular near zero over a dense subsemigroup of $((0,\infty),+)$  \/} which involve the notion of central sets near zero. Central sets near zero were introduced by Hindman and Leader \cite{refHL} and these sets also enjoy rich combinatorial structure like central sets.\\

 We shall present the notion central sets and central sets near zero after giving a brief description of algebraic structure of $\beta S_d$ for a discrete semigroup $(S, +)$. We take the points of $\beta S$ to be the ultrafilters on $S$, identifying the principal ultrafilters with the points of S and thus pretending that $S\subseteq\beta S$. Given $A\subseteq S$,
$$\cl A = \overline{A}= \{p\in\beta S : A\in p\}$$ is a basis for a topology on $\beta S$.
The operation $+$ on $S$ can be extended to the Stone-\v{C}ech compactification $\beta S$ of $S$ so that $(\beta S,+)$ is a compact right topological semigroup (meaning that for any $p \in \beta S$, the function $\rho_p : \beta S \rightarrow \beta S$ defined by $\rho_p(q) = q + p$ is continuous) with S contained in its topological center (meaning that for any $x \in S$, the function
$\lambda_x : \beta S \rightarrow \beta S$ defined by $\lambda_x(q) = x + q$ is continuous). Given $p,q\in \beta S$ and $A\subseteq S$, $A\in p + q$ if and only if $\{x\in S:-x+A\in q\}\in p$, where $-x+A=\{y\in S:x+y\in A\}$. \\

 A nonempty subset $I$ of a semigroup $(T,+)$ is called a \emph{left ideal of $S$} if $T+I\subset I$,  a \emph{right ideal} if $I+T\subset I$,
and a \emph{two sided ideal} (or simply an \emph{ideal}\/) if it is both a left and  right ideal.
A \emph{minimal left ideal} is the left ideal that does not contain any proper left ideal.
Similarly, we can define \emph{minimal right ideal} and \emph{smallest ideal}.

Any compact Hausdorff right topological semigroup $(T,+)$
has a smallest two sided ideal

$$\begin{array}{ccc}
    K(T) & = & \bigcup\{L:L \text{ is a minimal left ideal of } T\} \\
         & = & \,\,\,\,\,\bigcup\{R:R \text{ is a minimal right ideal of } T\}\\
  \end{array}$$

Given a minimal left ideal $L$ and a minimal right ideal
$R$, $L\cap R$ is a group, and in particular contains
an idempotent.  An idempotent in $K(T)$ is called
a {\it minimal\/} idempotent.  If $p$ and $q$ are idempotents in $T$
we write $p\leq q$ if and only if $p+q=q+p=p$. An idempotent
is minimal with respect to this relation if and only if it is a member of the smallest ideal.
See \cite{refHS} for an elementary introduction to the algebra of $\beta S$ and for any unfamiliar details.

\begin{defn}
Let $(S,+)$ be an infinite discrete semigroup. A set $C\subseteq S$ is central if and only if there is some minimal idempotent $p$ in $(\beta S, +)$ such that $C\in p$.
\end{defn}

We have been considering semigroups which are dense in $((0,\infty),+)$. Here ``dense" means with respect to the usual topology on $((0,\infty),+)$. When passing to the Stone-\v{C}ech compactification of such a semigroup $S$, we deal with $S_{d}$ which is the set $S$ with the discrete topology.

\begin{defn}
If $S$ is a dense subsemigroup  of $((0,\infty),+)$, then $0^+(S)=\{ p\in\beta S_d: (\forall\epsilon>0)((0,\epsilon)\cap S \in p)\}$.
\end{defn}

It is proved in \cite[Lemma 2.5]{refHL}, that $0^+(S)$ is a compact right topological subsemigroup of $(\beta S_d,+)$. It was also noted that $0^+(S)$ is disjoint from $K(\beta S_d)$ and hence gives some new information which are not available from $K(\beta S_d)$. Being compact right topological semigroup $0^+(S)$ contains minimal idempotents.  In \cite{refDH}, the authors applied the algebraic structure of $0^+(S)$ on their investigation of image partition regularity near zero of finite and infinite matrices. In \cite{refDPr} has been used algebraic structure of $0^+(\mathbb{R})$ to investigate image partition regularity of matrices with real entries from $\mathbb{R}$.\\

\begin{defn}
Let $S$ be a dense subsemigroup of $((0,\infty),+)$,
A set $C$ is central near $0$ if and only if there is some minimal idempotent $p$ in $0^+(S)$ such that $C\in p$.

\end{defn}

Next we present some well known characterizations of image partition regularity of matrices.

\begin{thm}\label{iprfinitech}
Let $u,v\in \mathbb{N}$ and let $M$ $u\times v$ matrix with entries from $\mathbb{Q}$. The following statements are equivalent.

\begin{itemize}
\item[(a)] M is image partition regular.

\item[(b)] For every central subset $C$ of $\mathbb{N}$, there exists $\vec x\in \mathbb{N}^{v}$ such that $M\vec x\in C^{u}$.

\item[(c)] For every central subset $C$ of $\mathbb{N}$, $\{\vec x\in \mathbb{N}^{v}$ : such that $M\vec x\in C^{u}\}$ is central in $\mathbb{N}^{v}$.

\item[(d)] For each $\vec r\in \mathbb{Q}^{v}\setminus \{\vec 0\}$ there exists $b\in \mathbb{Q}\setminus 0$ such that \\
    $$\left(\begin{array}{c}b\vec r\\
    M
 \end{array}\right)\,
$$
is image partition regular.

\item[(e)] For every central subset $C$ of $\mathbb{N}$, there exists $\vec x\in \mathbb{N}^{v}$ such that $\vec y=M\vec x\in C^{u}$, all entries of $\vec x$ are distinct, and for all $i,j\in \{1,2,\ldots,u\}$, if rows $i$ and $j$ of $M$ are unequal, then $y_{i}\neq y_{j}$.

\end{itemize}

\end{thm}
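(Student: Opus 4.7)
The plan is to close the cycle $(a) \Rightarrow (e) \Rightarrow (c) \Rightarrow (b) \Rightarrow (a)$ and to handle $(a) \Leftrightarrow (d)$ separately. The implications $(e) \Rightarrow (c) \Rightarrow (b)$ are immediate, since (e) strengthens (b) and every central set is nonempty. For $(b) \Rightarrow (a)$, given a partition $\mathbb{N} = C_1 \cup \cdots \cup C_r$, pick any minimal idempotent $p \in \beta\mathbb{N}$; the cell $C_i \in p$ is central, so (b) provides $\vec x \in \mathbb{N}^v$ with $M\vec x \in C_i^u$, which is exactly the definition of image partition regularity.

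The core step is $(a) \Rightarrow (e)$, for which I would work with the algebra of $(\beta\mathbb{N},+)$. Let
$$J(M) = \{\, p \in \beta\mathbb{N} : (\forall A \in p)(\exists\, \vec x \in \mathbb{N}^v)\ M\vec x \in A^u \,\}.$$
A routine ultrafilter/compactness argument together with image partition regularity of $M$ shows that $J(M)$ is a nonempty closed subset of $\beta\mathbb{N}$. The key algebraic fact is that $J(M)$ is a left ideal of $\beta\mathbb{N}$: given $p \in J(M)$ and $q \in \beta\mathbb{N}$, for $A \in q + p$ the set $\{n : -n + A \in p\}$ lies in $q$, and then continuity of $\lambda_n$ and $\rho_p$ together with the rational structure of $M$ (allowing translation of $\mathbb{N}^v$-solutions by a suitable column vector) produces $\vec x$ with $M\vec x \in A^u$. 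As a closed left ideal, $J(M)$ meets every minimal left ideal of $\beta\mathbb{N}$, so it contains a minimal idempotent. For any central set $C$, one argues that there is a minimal idempotent $p \in J(M)$ with $C \in p$, whence the defining property of $J(M)$ yields $\vec x$ with $M\vec x \in C^u$. Upgrading this to (e) (distinct entries of $\vec x$, and $y_i \neq y_j$ for unequal rows of $M$) is arranged by removing finitely many bad values from $C$; these exclusions preserve membership in $p$ since $p$ is nonprincipal. Assertion (c) is then obtained by running the same argument in $\beta(\mathbb{N}^v)$, where the set of good $\vec x$ is shown to be a member of a minimal idempotent of $\beta(\mathbb{N}^v)$ and is therefore central in $\mathbb{N}^v$.

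For $(a) \Leftrightarrow (d)$, the direction $(d) \Rightarrow (a)$ is trivial by dropping the first row of $\binom{b\vec r}{M}$. For $(a) \Rightarrow (d)$, given $\vec r \in \mathbb{Q}^v \setminus \{\vec 0\}$, I would first clear denominators by choosing an integer $b$ so that $b\vec r \in \mathbb{Z}^v$, then apply the central-set formulation (c) and exploit the FS-structure inside central sets to absorb the additional linear constraint $b\vec r \cdot \vec x \in C$ after possibly passing to a central subset of $C$. The principal obstacle I anticipate is the clean verification that $J(M)$ is a left (or two-sided) ideal of $\beta\mathbb{N}$: producing the required $\mathbb{N}^v$-solution of $M\vec x \in A^u$ from a translated solution in $(-n+A)^u$ depends in a delicate way on the interaction between the $v$-fold product structure of $\mathbb{N}^v$ and the one-dimensional shifts in $\beta\mathbb{N}$, and it is here that the assumption of rational entries of $M$ is used.
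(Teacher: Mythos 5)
The paper does not prove this theorem at all: it simply cites \cite[Theorem 2.10]{refHLSb}, so your argument has to stand on its own, and it does not. The fatal step is the ``key algebraic fact'' that
$J(M)=\{p\in\beta\mathbb{N}:(\forall A\in p)(\exists\vec x\in\mathbb{N}^v)\,M\vec x\in A^u\}$
is a left ideal of $(\beta\mathbb{N},+)$. This is false. For $M=(2)$ (a perfectly good $1\times 1$ IPR matrix) one has $J(M)=\overline{2\mathbb{N}}$, which is not a left ideal: if $p\in\overline{2\mathbb{N}}$ and $q$ contains the odd numbers, then $2\mathbb{N}\notin q+p$. For the Schur matrix the same failure occurs: take an idempotent $p$ with $3\mathbb{N}\in p$ and $q=1$; then $1+B\in 1+p$ for some $B\subseteq 3\mathbb{N}$, and $1+B$ contains no Schur triple since $a\equiv b\equiv 1$ forces $a+b\equiv 2\pmod 3$. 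The obstruction is exactly the one you flagged yourself: to turn a solution of $M\vec x\in(-n+A)^u$ into a solution of $M\vec x'\in A^u$ you would need the all-ones vector of length $u$ to lie in the image of $M$ over $\mathbb{Z}$ and the translated $\vec x$ to stay in $\mathbb{N}^v$, and neither is available. Moreover, even if $J(M)$ were a closed left ideal, that would only put \emph{some} minimal idempotent inside $J(M)$; statement (b) needs \emph{every} minimal idempotent to lie in $J(M)$, since a given central set $C$ may belong only to minimal idempotents in particular minimal left ideals. Your assertion that ``one argues that there is a minimal idempotent $p\in J(M)$ with $C\in p$'' is precisely the content of the theorem and cannot be extracted from the left-ideal picture.

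Two smaller problems: the claim that $(e)\Rightarrow(c)$ is ``immediate'' is not right --- (e) gives existence of one good $\vec x$, while (c) asserts the set of all good $\vec x$ is central in $\mathbb{N}^v$; you do return to (c) later, but the cycle as first announced is broken. And the sketch of $(a)\Rightarrow(d)$ (``absorb the additional linear constraint using FS-structure'') is not an argument; this is one of the genuinely hard implications in \cite{refHLSb}. The route that actually works, and the one behind the cited theorem, is structural rather than ideal-theoretic: one first shows that a finite IPR matrix with rational entries can be reduced to (or coupled with) a \emph{first entries matrix}, and then invokes the First Entries Theorem, which guarantees that every central set contains images of every first entries matrix. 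That reduction is where the real work lies, and the left-ideal argument does not substitute for it.
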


\begin{proof}

~\cite[Theorem 2.10]{refHLSb}

\end{proof}

In paper ~\cite {refHLSc}, the authors presented some contrasts between finite and infinite partition regular matrices and so showed that some of very interesting properties for finite image partition regularity could not be generalized for infinite image partition regular matrices. \\

It is interesting to observe that an
important  property is  an immediate
consequence of Theorem \ref{iprfinitech}(b), namely that if
$M$ and $N$ are finite
image partition regular matrices,  then the matrix
$$\left(\begin{array}{cc}M&\bigo\\ \bigo&N\\ \end{array}\right)$$
is also image partition regular. But this property does not hold good for infinite matrices as was shown in   ~\cite[Theorem 2.2]{refHLSc}.

\begin{thm}\label{mksepn}

Let $\vec b$ be a compressed sequence with entries from $\mathbb{N}$ such that $\vec b\neq (1)$. Let $M$ be a matrix whose rows are all rows $\vec a\in \mathbb{Q}^{\omega}$ with only finitely many nonzero entries such that $c(\vec a)=\vec b$. Let $N$ be the finite sums matrix.

\begin{itemize}

\item[(a)] The matrices $M$ and $N$ are image partition regular.

\item[(b)] There is a subset $C$ of $\mathbb{N}$ which is a member of every idempotent in $\beta \mathbb{N}$ (and is thus, in particular, central) such that for no $\vec x\in \mathbb{N}^{\omega}$ does one have $M\vec x \in C^{\omega}$.

\item[(c)] The matrix   $$\left(\begin{array}{cc}M&\bigo\\
\bigo&N
\end{array}\right)\,
$$
is not image partition regular.

\end{itemize}

\end{thm}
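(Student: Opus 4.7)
The plan is to handle the three parts in the order (a), (c), (b), since (c) will follow easily from (a) and (b) while (b) is the main obstacle. For part (a), both claims reduce to classical theorems: since $N$ is the finite sums matrix we have $N\vec y = FS(\vec y)$, and image partition regularity of $N$ is exactly Hindman's theorem; the matrix $M$ has as its rows all $\vec a\in\mathbb{Q}^\omega$ with compressed form $\vec b$, so the entries of $M\vec x$ enumerate all Milliken-Taylor sums of shape $\vec b$ supported on the $x_i$'s, and image partition regularity of $M$ is exactly the Milliken-Taylor theorem.

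For part (c), given the set $C$ from (b), I would apply the hypothesized image partition regularity of the block-diagonal matrix to the two-coloring $\{C,\mathbb{N}\setminus C\}$. This produces $\vec x$ and $\vec y$ with both $M\vec x$ and $N\vec y$ lying in a common color class $D$. The choice $D=\mathbb{N}\setminus C$ is impossible, because $FS(\vec y)=N\vec y$ is contained in some idempotent of $(\beta\mathbb{N},+)$ by Galvin-Glazer, which would place $\mathbb{N}\setminus C$ in that idempotent and contradict $C$ lying in every idempotent. The choice $D=C$ is immediately forbidden by (b).

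For part (b), the entire work is to construct $C$. The desired properties are that $C$ belongs to every idempotent of $(\beta\mathbb{N},+)$ while no Milliken-Taylor sum of shape $\vec b$ drawn from any sequence $\vec x$ lies entirely inside $C$. Since $\vec b\neq (1)$, either $\vec b$ has length at least two or its single entry exceeds $1$, and in either case the $\vec b$-sums carry an arithmetic rigidity absent from ordinary finite sums. I would exploit this rigidity by passing to a base-$r$ representation with $r$ chosen in terms of $\max_i b_i$ and the length of $\vec b$, and defining $C$ as the union of a carefully placed family of digit-pattern windows. Membership of $C$ in every idempotent can then be checked by verifying that the complement contains no FS-set (equivalently, lies in no idempotent), while the absence of any $M$-image from $C^\omega$ uses $\vec b\neq(1)$ essentially: when $\vec b=(1)$ the $M$-sums degenerate to ordinary finite sums and no digit-theoretic obstruction is available, reflecting the fact that for $\vec b=(1)$ statement (b) is false. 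Calibrating the windows to achieve this balance between bigness and structural incompatibility with the shape $\vec b$ is the main obstacle of the proof.
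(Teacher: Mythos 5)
Your reductions in parts (a) and (c) are correct: (a) is exactly Hindman's theorem plus the Milliken--Taylor theorem, and (c) does follow from (a) and (b) by the two-colouring $\{C,\mathbb{N}\setminus C\}$, using Galvin--Glazer to place $FS(\vec y)$ inside an idempotent and thereby rule out the colour class $\mathbb{N}\setminus C$. (For the record, the paper itself gives no argument here at all; it cites \cite[Theorem 2.2]{refHLSc}, and the surrounding text notes that the engine of that proof is the Milliken--Taylor separating theorem of \cite{refDHLL}.)

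Part (b), however, is where the whole theorem lives, and your proposal does not prove it: you describe the \emph{shape} of a construction (``digit-pattern windows in base $r$'') but never exhibit a candidate set $C$, and you explicitly defer the calibration that would make the two required verifications go through. Both verifications are nontrivial. To show $C$ lies in every idempotent you must show $\mathbb{N}\setminus C$ contains no set of the form $FS(\langle x_n\rangle_{n=1}^\infty)$, and to kill the $M$-images you must show $C$ contains no full Milliken--Taylor system $MT(\vec b,\langle x_n\rangle_{n=1}^\infty)$; a set defined by a naive congruence on a digit-counting statistic typically fails the first requirement, since its complement is large enough to contain IP sets. The actual construction in \cite[Theorem 2.2]{refHLSc} (and its dyadic analogue, Proposition \ref{separtmt} above, where one counts even $0$-blocks in the binary expansion and reduces mod $3$) carries out precisely this delicate balancing by a careful analysis of how supports and carries interact under $\vec b$-weighted sums versus plain finite sums; that analysis, which uses $\vec b\neq(1)$ at a specific point, is the content of the theorem and is absent from your sketch. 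As written, (b) is a plan rather than a proof, and since (c) depends on (b), the argument is incomplete.
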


\begin{proof}

~\cite[Theorem 2.2]{refHLSc}

\end{proof}

 To overcome the above situation the following notion was introduced in ~\cite[Definition 2.7]{refHLSc}.

\begin{defn}\label{centralipr}

Let $M$ be an $\omega\times \omega$ matrix with entries from $\mathbb{Q}$. Then $M$ is {\em centrally image partition regular  \/} if and only if whenever $C$ is a central set in $\mathbb{N}$, there exists $\vec x\in \mathbb{N}^{\omega}$ such that $M\vec{x}\in C^{\omega}$.
\end{defn}

Note that the above definition ~\ref{centralipr} has a natural generalization for arbitrary  subsemigroup $S$ of $((0,\infty),+)$, and hence forth we will abbreviate this by CIPR/$S$. Motivation behind the introduction this new notion was that the principal good properties of finite image partition regular matrices could not be extended with respect to infinite image partition regular matrices.\\

It is easy to see that whenever $M$ and $N$ are {\em centrally image partition regular  \/} matrices over any subsemigroup $S$ of $((0,\infty),+)$, then so is
$$\left(\begin{array}{cc}M&\bigo\\
\bigo&N
\end{array}\right)\,.
$$\\

The above observation tells us that centrally image partition regular matrices are more natural candidate to generalize finite image partition regularity  in case of infinite matrices.\\
 In this course we introduce another natural candidate to generalize the properties of finite image partition regularity near zero in case of infinite matrices.

\begin{defn}

Let $M$ be an $\omega\times \omega$ matrix with entries from $\mathbb{Q}$ and let $S$ be a dense subsemigroup of $((0,\infty),+)$. Then $M$ is {\em centrally image partition regular near zero \/} if and only if whenever $C$ is a central set near zero in $S$, there exists $\vec x\in S^{\omega}$ such that $M\vec{x}\in C^{\omega}$.

\end{defn}

Hence forth for arbitrary  subsemigroup $S$ of $((0,\infty),+)$, we will abbreviate {\em centrally image partition regular near zero  \/} over $S$ by CIPR/$S_0$.

This is the simple fact that if $M$ and $N$ be two centrally image partition regular near zero matrices over a dense subsemigroup $S$ of $((0,\infty),+)$, then the diagonal sum $$\left(\begin{array}{cc}M&\bigo\\
\bigo&N
\end{array}\right)\,
$$
is also centrally image partition regular near zero matrix over a dense subsemigroup $S$ of $((0,\infty),+)$.

The following Examples show that there exists infinite matrices which are centrally image partition regular over $\mathbb{Q}^{+}$ but not centrally image partition regular near zero over $\mathbb{Q}^{+}$ and vice versa.

\begin{ex}\label{NnotRzp}

Let
$$M=\left(\begin{array}{ccccc}1&0&0&0&\ldots\\
2&1&0&0&\ldots\\
4&0&1&0&\ldots\\
8&0&0&1&\ldots\\
\vdots&\vdots&\vdots&\vdots&\ddots\end{array}\right)\,.
$$
Then $M$ is  CIPR/$\mathbb{Q}^{+}$  matrix but is not CIPR/$\mathbb{Q}^{+}_{0}$.

\end{ex}

\begin{proof}

 To see that $M$ is centrally image partition regular matrix, let $C$ be any central set in $\mathbb{Q}^{+}$
and pick a monochromatic sequence $\langle y_n\rangle_{n=0}^\infty$ in $C$
such that for each $n\in \mathbb{N}$, $y_n>2^ny_0$.  Let $x_0=y_0$ and for
each $n\in \mathbb{N}$, let $x_n=y_n-2^ny_0$.  Then $M\vec x=\vec y$.

Now $(0,1)\cap\mathbb{Q^{+}}$ is a central set near zero in $\mathbb{Q^{+}}$ and suppose one has $\vec x\in (\mathbb{Q}^{+})^\omega$ such that $\vec y=M\vec x\in
((0,1)\cap \mathbb{Q^{+}})^\omega$.
Then $x_0=y_0> 0$.  Pick $k\in \mathbb{N}$ such that $2^kx_0>1$.  Then
$y_k=2^kx_0+x_k> 1$, a contradiction.

\end{proof}

\begin{ex}\label{QzspnotD}  Let
$$M=\left(\begin{array}{cccccc}1&-1&0&0&0&\ldots\\
1/3&0&-1&0&0&\ldots\\
1/5&0&0&-1&0&\ldots\\
1/7&0&0&0&-1&\ldots\\
\vdots&\vdots&\vdots&\vdots&\vdots&\ddots\end{array}\right)\,.$$
Then $M$ is CIPR/$\mathbb{Q}^{+}_0$ but is not CIPR/$\mathbb{Q}^+$.
\end{ex}

\begin{proof} To see that $M$ is not CIPR/$\mathbb{Q}^{+}$, let $C$ be a central set in $\mathbb{Q}^{+}$ and we show that there
is no $\vec x\in (\mathbb{Q}^{+})^\omega$ such that $\vec y=M\vec x\in C^\omega$.
Indeed, suppose one has such $\vec x$ and pick $n\in\mathbb{N}$ such that
$x_0/(2n+1)\leqslant x_{0}$.  Then $y_n=x_0/(2n+1)-x_{n+1}$ is also bounded by $x_{0}$ in $\mathbb{Q}^+$.

To see that $M$ is CIPR/$\mathbb{Q}^{+}_0$ near zero
let $C$ be a central set near zero in $\mathbb{Q}^+$ such that $0\in \cl C$, and
pick a sequence $\langle y_n\rangle_{n=0}^\infty$ in $C$ which converges to $0$.
We may also assume that for each $n$, $y_n<1/(2n+1)$.  Let $x_0=1$
and for $n\in\mathbb{N}$, let $x_n=1/(2n-1)-y_{n-1}$.  Then $M\vec x=\vec y\in C^\omega$.
\end{proof}



In ~\cite{refHLSc}, we have seen that finite image partition regularity matrices hold some interesting properties but not infinite image partition regular matrices. In this paper we show this behaviour is also true for the notion of image partition regularity near zero. This is why we introduce the notion centrally image partition regularity near zero.
Now in section $2$ of this paper, we first prove that for two infinite image partition regular matrices near zero, i.e. $M$ and $N$, over $\mathbb{D}^{+}$ the diagonal sum $$\left(\begin{array}{cc}M&\bigo\\
\bigo&N
\end{array}\right)\,
$$
is not image partition regular near zero over $\mathbb{D}^{+}$. But we show that infinite image partition regular near zero matrices can be extended by finite ones. Also we show in proposition that how new types of centrally infinite image partition regular matrices near zero are constructed from old one.\\
In section $3$, we prove that a special type of infinite image partition regular matrices (i.e. segmented image partition regular matrices) are also centrally image partition regular near zero.

\section{Centrally image partition regularity of matrices near zero}

In Theorem ~\ref{mksepn} we have found two infinite image partition regular matrices $M$ and $N$ over $\mathbb{N}$ while the diagonal sum $$\left(\begin{array}{cc}M&\bigo\\
\bigo&N
\end{array}\right)\,
$$
is not image partition regular matrix over $\mathbb{N}$. But the central tool to prove the above Theorem is Milliken-Taylor separating theorem
~\cite[Theorem 3.2]{refDHLL}. Recently in ~\cite{refWd}, Milliken-Taylor separating theorem has been proved for dyadic rational numbers which employ to prove the following  generalization of ~\ref{mksepn}. First we recall some Definitions from ~\cite{refWd}.

\begin{defn}

The set of dyadic rational numbers is given by\\ $\mathbb{D}=\{\frac{m}{2^{t}}$ : $m\in \mathbb{Z}$ and $t\in \omega\}$.

\end{defn}

We will be considering $\mathbb{D}^{+}$, the set of positive numbers contained in $\mathbb{D}$.

\begin{defn}

Let $x\in \mathbb{D}^{+}$. The {\em support\/} of $x$, denoted supp$(x)$, is the unique finite nonempty subset of $\mathbb{Z}$ such that $x=\sum_{t\in supp(x)}2^{t}$.

\end{defn}

\begin{defn}

Given a binary number, an \emph{even} $0$-\emph{block} is the occurrence of a positive even total of consecutive zeros between two consecutive ones.

\end{defn}

For $x\in \mathbb{D}^{+}$, define the \emph{start} of $x$ as the position of the first $1$ appearing in $x$ moving from left to right and the \emph{end} as the position of the last $1$. The formal definition is the following.

\begin{defn}
Let $x\in\mathbb{D}^+$. Then $x=\sum_{t\in\text{supp}(x)}2^t$ where $\text{supp}(x)\in\mathcal{P}_f(\mathbb{Z})$. Define the \emph{start} of $x$ as the max supp$(x)$ and the \emph{end} as the min supp$(x)$.

\end{defn}

Now we present the following Proposition from ~\cite[Proposition 2.12]{refWd} that play the key role to prove the following Theorem \ref{NotCentrally}.

\begin{prop}\label{separtmt}

Let $\varphi(z)$ be the number of even $0$-blocks between the start and end of $z$ for any $z\in \mathbb{D}\cap (0,2)$. For $i\in \{0,1,2\}$, let $C_{i}=\{c\in \mathbb{D}\cap (0,2)$ : $\varphi(c)\equiv i$ mod $3\}$. Then $\{C_{0}, C_{1}, C_{2}\}$ is a partition of $\mathbb{D}\cap (0,2)$ such that no $C_{i}$ contains $MT(\langle 1 \rangle,\langle x_i\rangle_{i=1}^\infty)\cup
MT(\langle 1,2 \rangle,\langle y_i\rangle_{i=1}^\infty)$ for any sequences
$\langle x_i\rangle_{i=1}^\infty$ and $\langle
y_i\rangle_{i=1}^\infty$ in  $\mathbb{D}\cap (0,2)$.
\end{prop}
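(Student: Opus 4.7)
The strategy is a proof by contradiction. That $\{C_0,C_1,C_2\}$ partitions $\mathbb{D}\cap(0,2)$ is immediate since $\varphi$ takes values in $\mathbb{Z}_{\geq 0}$. For the main claim I assume some $C_r$ contains $MT(\langle 1\rangle,\langle x_i\rangle)\cup MT(\langle 1,2\rangle,\langle y_i\rangle)$ and aim to exhibit two elements of this union whose $\varphi$-values lie in distinct residue classes modulo $3$.

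First I would record two elementary facts. If $u,v\in\mathbb{D}^+$ have disjoint supports with $\max\mathrm{supp}(u)<\min\mathrm{supp}(v)$, then
$$
\varphi(u+v)=\varphi(u)+\varphi(v)+\delta(u,v),
$$
where $\delta(u,v)=1$ iff the gap $\min\mathrm{supp}(v)-\max\mathrm{supp}(u)-1$ is positive and even (the intervening zero-block has positive even length), and $0$ otherwise; and $\varphi(2z)=\varphi(z)$, since multiplication by $2$ rigidly shifts the support by one. Next, by standard pigeonhole I would pass to infinite subsequences of both $\langle x_i\rangle$ and $\langle y_i\rangle$ along which $\varphi(x_i)\equiv a\pmod 3$ and $\varphi(y_i)\equiv a'\pmod 3$ are constant, and the consecutive-support gaps are all positive with constant parities $b,b'\in\{0,1\}$ respectively. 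Since both MT systems are invariant under this extraction, they remain contained in $C_r$.

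Iterating the additivity formula, partial sums of the first MT system satisfy $\varphi(x_1+\cdots+x_n)\equiv na+(n-1)b\pmod 3$. An explicit check over the six parameter pairs $(a,b)\in\{0,1,2\}\times\{0,1\}$ shows that this sequence is constant in $n$ only when $(a,b)\in\{(0,0),(2,1)\}$, giving residues $0$ and $2$ respectively; in the other four cases the first MT system already hits more than one residue class, yielding the contradiction. So I may assume we are in one of these two degenerate cases and the first MT system is contained in $C_0$ or $C_2$.

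In these residual cases I would analyze the single element $y_1+y_2+2y_3\in MT(\langle 1,2\rangle,\langle y_i\rangle)$ (valid with $F_1=\{1,2\}$ and $F_2=\{3\}$). Applying the additivity formula twice,
$$
\varphi(y_1+y_2+2y_3)\equiv 3a'+\delta_1+\delta_2\equiv\delta_1+\delta_2\pmod 3,
$$
where $\delta_1$ is the junction indicator between $y_1$ and $y_2$ (so $\delta_1=[b'=0]$) and $\delta_2$ the indicator between $y_2$ and $2y_3$. The crux is that the coefficient $2$ shifts the gap from $y_2$ to $2y_3$ by one compared with the gap from $y_2$ to $y_3$, changing its parity from $b'$ to $b'+1$, so $\delta_2=[b'=1]$. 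Hence $\delta_1+\delta_2=1$ regardless of $b'$, and $\varphi(y_1+y_2+2y_3)\equiv 1\pmod 3$, which lies in neither $\{0\}$ nor $\{2\}$, producing the contradiction. The main obstacle is identifying this ``parity-cancelling'' element and recognizing that the coefficient $2$ is precisely what forces a residue outside $\{0,2\}$; everything else reduces to bookkeeping via pigeonhole and the additivity formula.
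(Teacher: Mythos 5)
The paper does not actually prove this proposition: it is quoted verbatim from Williams' dissertation \cite[Proposition 2.12]{refWd}, so there is no in-paper argument to compare yours against. Your reconstruction follows the standard Milliken--Taylor separation scheme (additivity of the block count over separated supports, normalization by pigeonhole, then a residue computation), and the two computations at its core are correct: initial sums of the FS-system satisfy $\varphi(x_1+\cdots+x_n)\equiv na+(n-1)b \pmod 3$, which is constant in $n$ only for $(a,b)\in\{(0,0),(2,1)\}$ with values $0$ and $2$, while the coefficient $2$ shifts exactly one junction gap by one, forcing $\varphi(y_1+y_2+2y_3)\equiv 1$.

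Two steps need shoring up. First, ``standard pigeonhole'' does not by itself let you pass to a subsequence with positive (let alone controllably large) support gaps: for an arbitrary sequence in $\mathbb{D}\cap(0,2)$ the supports may coincide or interleave (e.g.\ $x_i=1$ for all $i$), and then no subsequence separates them and the additivity formula is unavailable because of carrying. What saves you is the hypothesis you are trying to contradict: since every finite sum of the $x_i$ (and every $y_1+2(y_2+\cdots+y_n)$) lies in $C_r\subseteq(0,2)$, the series $\sum_i x_i$ and $\sum_i y_i$ converge, so $x_i,y_i\to 0$ and hence $\max\,\mathrm{supp}(x_i)\to-\infty$; only then can you greedily extract a subsequence whose supports are separated by gaps of any prescribed size, after which pigeonhole on the triple $(\varphi(\cdot)\bmod 3,\ \min\mathrm{supp}(\cdot)\bmod 2,\ \max\mathrm{supp}(\cdot)\bmod 2)$ makes the residues and all pairwise gap parities constant. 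This observation must be explicit --- it is exactly where the restriction to the interval $(0,2)$ enters. Second, your identity $\delta_2=[b'=1]$ fails in the edge case where the gap between $y_2$ and $y_3$ equals $1$: the shifted gap is then $0$, which is not a $0$-block at all, so $\delta_1+\delta_2=0$ and the exhibited element lands in $C_0$ rather than $C_1$. You need to insist that the extracted gaps are at least $2$, which the separation argument above permits. With these two repairs the proof goes through.
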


\begin{proof}

~\cite[Proposition 2.12]{refWd}

\end{proof}

\begin{thm}\label{NotCentrally}

Let $M$ be finite sum matrix and $N$ be the Milliken-Taylor matrix determined by  compressed sequence $\langle 1,2\rangle$. Then

\begin{itemize}

\item[(a)] The matrices $M$ and $N$ are image partition regular near zero over $\mathbb{D}^{+}$.


\item[(b)] The matrix   $$\left(\begin{array}{cc}M&\bigo\\
\bigo&N
\end{array}\right)\,
$$
is not image partition regular near zero over $\mathbb{D}^{+}$.
\item[(c)] The matrix $N$ is not centrally image partition regular near zero over $\mathbb{D}^{+}$.

\end{itemize}

\end{thm}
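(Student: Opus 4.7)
The plan is to leverage Proposition~\ref{separtmt} as the key separating tool and combine it with the idempotent structure of $0^+(\mathbb{D}^+)$. Part~(a) is largely standard. The finite sums matrix $M$ is image partition regular near zero because any $C$ in an idempotent $p\in 0^+(\mathbb{D}^+)$ contains $FS(\langle x_n\rangle)$ with $x_n$ arbitrarily small (the usual Galvin--Glazer argument, intersecting $C^\star=\{c\in C:-c+C\in p\}$ with the tail sets $(0,1/n)\cap\mathbb{D}^+\in p$); and the Milliken--Taylor matrix $N$ with compressed sequence $\langle 1,2\rangle$ is handled by the near-zero Milliken--Taylor theorem of~\cite{refDH}: given an idempotent $p\in 0^+(\mathbb{D}^+)$, any $C\in p+2\cdot p$ contains all sums $y_{i_0}+2(y_{i_1}+\cdots+y_{i_k})$ with $i_0<i_1<\cdots<i_k$ from some sequence $\langle y_n\rangle$ tending to $0$.

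For part~(b), I would lift Proposition~\ref{separtmt} to a partition obstruction. The sets $C_0,C_1,C_2$ partition $\mathbb{D}\cap(0,2)=\mathbb{D}^+\cap(0,2)$, so if the diagonal sum were image partition regular near zero then, applied to this partition with $\epsilon=2$, it would produce $\vec x,\vec y\in(\mathbb{D}^+)^\omega$ and some $i\in\{0,1,2\}$ with $M\vec x,N\vec y\in C_i^\omega$. Every entry of $M\vec x$ has the form $x_{j_0}+\cdots+x_{j_k}\in(0,2)$, which forces each $x_n<2$; every entry of $N\vec y$ has the form $y_{j_0}+2(y_{j_1}+\cdots+y_{j_k})\in(0,2)$, which forces each $y_n<2$. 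Hence $C_i$ simultaneously contains $MT(\langle 1\rangle,\langle x_n\rangle)$ and $MT(\langle 1,2\rangle,\langle y_n\rangle)$ with both sequences in $\mathbb{D}\cap(0,2)$, contradicting Proposition~\ref{separtmt}.

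For part~(c), fix any minimal idempotent $p\in 0^+(\mathbb{D}^+)$. Because $(0,2)\cap\mathbb{D}^+\in p$, exactly one of $C_0,C_1,C_2$ lies in $p$; call it $D$. Then $D$ is central near zero, so it suffices to show that $N$ admits no image in $D$. Suppose for contradiction that $N\vec y\subseteq D$ for some $\vec y\in(\mathbb{D}^+)^\omega$. As in part~(b), reading off the rows of $N$ forces $y_n<2$ for every $n$, since $y_n+2y_m\in D\subseteq(0,2)$ for any $m>n$. Independently, because $D\in p$ and $p$ is idempotent, the Galvin--Glazer argument produces a sequence $\vec x$ in $\mathbb{D}^+$ with $FS(\vec x)\subseteq D$, and $x_n\in FS(\vec x)\subseteq(0,2)$ forces $x_n<2$. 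Hence $D$ contains $MT(\langle 1\rangle,\langle x_n\rangle)$ and $MT(\langle 1,2\rangle,\langle y_n\rangle)$ with both sequences in $\mathbb{D}\cap(0,2)$, once again contradicting Proposition~\ref{separtmt}.

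The main obstacle I anticipate is the bookkeeping in parts~(b) and~(c) needed to verify that the sequences $\vec x,\vec y$ arising from the hypothetical images really do have entries in $\mathbb{D}\cap(0,2)$, as Proposition~\ref{separtmt} demands. This is forced by the bounded target set and the specific row shapes of $M$ and $N$, but deserves a line of justification in each case (the finite-sum rows of $M$ contain the standard basis rows, while for $N$ one must use a row with a single $1$ in position $n$ and a $2$ in some later position to bound $y_n$). Beyond that, the argument is a clean pairing of the separating partition of~\cite{refWd} with the near-zero idempotent machinery of~\cite{refHL,refDH}.
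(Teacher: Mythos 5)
Your proposal is correct and follows essentially the same route as the paper: part (a) from the near-zero results of \cite{refDH}, part (b) by feeding the partition of Proposition~\ref{separtmt} into the definition of image partition regularity near zero, and part (c) by combining (b) with the fact that the finite sums matrix has images in every central set near zero. The only cosmetic difference is that in (c) you unwind the argument directly, exhibiting the cell $C_i$ belonging to a minimal idempotent of $0^+(\mathbb{D}^+)$ as the offending central set near zero, whereas the paper phrases it as a contradiction via the closure of centrally image partition regular near zero matrices under diagonal sums; your version also makes explicit the bookkeeping that the relevant sequences lie in $\mathbb{D}\cap(0,2)$, which the paper leaves implicit.
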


\begin{proof}

Statement [a] follows from ~\cite[Theorem 5.7]{refDH}.

From ~\ref{separtmt} the matrix is $$\left(\begin{array}{cc}M&\bigo\\
\bigo&N
\end{array}\right)\,
$$ not image partition regular near zero over $\mathbb{D}^{+}$.

Again, since the matrix $$\left(\begin{array}{cc}M&\bigo\\
\bigo&N
\end{array}\right)\,
$$
is not image partition regular near zero over $\mathbb{D}^{+}$. Therefore $N$ is not CIPR/$\mathbb{D}^{+}_0$ as $M$ has its image in every central set near zero.

Let $N$ is centrally image partition regular near zero. Again $M$ is centrally image partition regular near zero follows from ~\cite[Theorem 3.1]{refHL}. Then the matrix $$\left(\begin{array}{cc}M&\bigo\\
\bigo&N
\end{array}\right)\,
$$
is  centrally image partition regular near 0 and hence also image partition regular near 0. But this is a contradiction. Therefore $N$ is not centrally image partition regular near zero over $\mathbb{D}^{+}$.

%

\end{proof}

Now we show that infinite image partition regular near zero matrices can be extended by finite ones.

\begin{thm}\label{extensionipr0}

Let $M$ be a finite image partition regular matrix over $\mathbb{N}$ and $N$ be an infinite image partition regular
near zero matrix over any dense subsemigroup $S$ of $((0,\infty),+)$. Then $$\left(\begin{array}{cc}M&\bigo\\
\bigo&N
\end{array}\right)\,
$$
is image partition regular near zero.

\end{thm}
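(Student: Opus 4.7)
The plan is to combine $N$'s IPR near zero with the fact (via Hindman--Leader, \cite[Theorem~3.1]{refHL}, as used already in the proof of Theorem~\ref{NotCentrally}) that the finite IPR matrix $M$ has an image inside every central set near zero of $S$. Given $\epsilon>0$ and a finite partition $S\cap(0,\epsilon)=\bigcup_{i=1}^{r}C_i$, it suffices to find a single color class $C_{i_0}$ that is simultaneously (i) central near zero in $S$ and (ii) \emph{$N$-rich at $0$}, meaning that for every $\delta>0$ there exists $\vec y\in S^\omega$ with $N\vec y\in(C_{i_0}\cap(0,\delta))^\omega$. For such $C_{i_0}$, Hindman--Leader supplies $\vec x^{(1)}\in S^v$ with $M\vec x^{(1)}\in C_{i_0}^u\cap(0,\epsilon)^u$, while (ii) supplies $\vec x^{(2)}\in S^\omega$ with $N\vec x^{(2)}\in C_{i_0}^\omega\cap(0,\epsilon)^\omega$, and the pair witnesses the IPR near zero of the diagonal sum for this partition.

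To produce such a $C_{i_0}$, I would first argue via a compactness argument in $0^+(S)$ that there exists an ultrafilter $p\in 0^+(S)$ every member of which is $N$-rich at $0$. The finite intersection property is extracted from $N$'s IPR near zero: if $A_1,\ldots,A_k$ all fail to be $N$-rich at $0$, then for some $\delta>0$ none of them contains an image of $N$ with entries in $(0,\delta)$, and applying the IPR near zero of $N$ to the partition $\{A_j\cap(0,\delta):j=1,\ldots,k\}\cup\{(S\cap(0,\delta))\setminus\bigcup_j A_j\}$ forces the last piece to contain an image of $N$ for every $\delta$, so $S\setminus\bigcup_j A_j$ is cofinal at $0$ and belongs to some element of $0^+(S)$. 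The next step is to move $p$ inside $K(0^+(S))$ so that its members are additionally central near zero: the natural attempt is to replace $p$ by $\tilde p=p+q$ for a minimal idempotent $q\in 0^+(S)$, which lies in $K(0^+(S))$ because the latter is a left ideal; one then must verify that the $N$-rich property is preserved. If so, the unique color class $C_{i_0}\in\tilde p$ has the required dual character and the argument closes.

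The main obstacle is the preservation of $N$-richness under the ultrafilter sum $p+q$. Unpacking the relation $A\in p+q\iff\{x\in S:-x+A\in q\}\in p$, the set $B=\{x\in S:-x+A\in q\}$ belongs to $p$ and is therefore itself $N$-rich at $0$; one must then use the entries of an image of $N$ landing in $B$, together with images of $N$ inside the translates $-x+A$, to build an image of $N$ inside $A$ itself. Because the near-zero setting keeps all relevant translates small and $q$-membership is a strong structural property, I expect this can be pushed through, but this is the most delicate point of the proof. Should it fail, one would need to replace ``$N$-rich at $0$'' by a suitably translation-invariant variant, or produce $\tilde p$ by a more direct route (for instance by showing that the set of $N$-rich ultrafilters is already a left ideal of $0^+(S)$, so that $\tilde p=p+q$ is automatically $N$-rich).
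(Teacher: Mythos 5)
Your strategy hinges on finding, for a given finite partition of $S$, a single cell $C_{i_0}$ that is simultaneously central near zero and $N$-rich at $0$. This object need not exist, and the paper itself contains the counterexample: take $S=\mathbb{D}^+$, let $N$ be the Milliken--Taylor matrix for $\langle 1,2\rangle$ (which is IPR near zero by Theorem \ref{NotCentrally}(a)), and let $M$ be any finite image partition regular matrix. Partition $\mathbb{D}\cap(0,2)$ into $C_0,C_1,C_2$ as in Proposition \ref{separtmt}. Any cell that is central near zero is a member of an idempotent in $0^+(\mathbb{D}^+)$ and hence contains $MT(\langle 1\rangle,\langle x_i\rangle_{i=1}^\infty)$ for some sequence; by Proposition \ref{separtmt} that cell then contains no $MT(\langle 1,2\rangle,\cdot)$ set, i.e.\ no image of $N$ at all, so it is not $N$-rich. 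Conversely any $N$-rich cell cannot be central near zero. So for this partition the central cells and the $N$-rich cells are disjoint families, and your first step cannot be carried out. Your fallback constructions do not rescue this: $\tilde p=p+q$ lies in $K(0^+(S))$ but is not an idempotent, so its members are only piecewise syndetic near zero rather than central near zero; and the set of $N$-rich ultrafilters is in general neither a right ideal (an image of $N$ inside $B=\{x:-x+A\in q\}$ plus images inside the various translates $-x+A$ do not assemble into a single image of $N$ inside $A$) nor a left ideal (translating an image of $N$ by a constant $x$ is not an image of $N$).

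The paper's proof avoids ever demanding a cell with this ``dual character.'' Given the coloring $\varphi$ with $r$ colors and $\epsilon>0$, it first finitizes $M$ by compactness to get $k$ with $M$-images inside $\{1,\ldots,k\}$, then applies the IPR near zero of $N$ not to $\varphi$ but to the product coloring $\psi(x)=\bigl(\varphi(tx)\bigr)_{t=1}^{k}$ with $r^k$ colors, obtaining $N\vec y$ monochromatic for $\psi$ with entries in $(0,\epsilon/k)$. An entry $a$ of $N\vec y$ then induces a coloring $t\mapsto\varphi(ta)$ of $\{1,\ldots,k\}$, the finite IPR of $M$ produces $\vec u$ with $M\vec u$ monochromatic there, and the scaled vectors $M(a\vec u)$ and $N(i\vec y)$ (for $i$ an entry of $M\vec u$) land in the same original color class $C_j\cap(0,\epsilon)$ --- with no centrality of $C_j$ asserted or needed. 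If you want to salvage your outline you must replace the search for a central-and-$N$-rich cell by some such scaling/product-coloring device; as written, the argument cannot be completed.
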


\begin{proof}

Let $S$ be $r$-colored by $\varphi$ as $S=\bigcup_{i=1}^r C_i$ and $\epsilon>0$. By a standard compactness argument (see \cite[Section 5.5]{refHS} )
there exists $k\in\mathbb{N}$ such that whenever $\{1,2,\cdots,k\}=\bigcup_{i=1}^r D_i$ there exists
$\vec{x}\in\{1,2,\cdots,k\}^v$ and $i\in\{1,2,\cdots,r\}$ such that $M\vec{x}\in (D_i)^u$. Pick $z\in S \cap (0,\epsilon/k)$.

Now color $S$ with $r^k$ colors via $\psi$ as $S=\bigcup_{i=1}^{r^k} F_i$, where $\psi(x)=\psi(y)$ if and only if for all $t\in\{1,2,\cdots,k\}$, $\varphi(tx)=\varphi(ty)$. Choose $\vec{y}\in S^\omega$ such that the entries of $N\vec{y}$ are in $F_i\cap (0,z)$ for some $i\in \{1,2,\cdots,r^{k}\}$.  Pick an entry $a$ of $N\vec{y}$ and for each $i\in\{1,2,\cdots,r\}$ let us set $D_i=\{t\in\{1,2,\cdots,k\}:ta\in C_i\}$. Then $\{1,2,\cdots,k\}=\bigcup_{i=1}^r D_i$. Note that since $a\in (0,z)$, $ta\in (0,\epsilon)$ for all $t\in\{1,2,\cdots,k\}$. If we express this coloring as $\gamma:\{1,2,\cdots,k\}\rightarrow \{1,2,\cdots,r\}$ then in fact $\gamma(p)=\varphi(ap)$.
So there exists $\vec{u}\in\{1,2,\cdots,k\}^v$ and $i\in\{1,2,\cdots,r\}$ such that $M\vec{u}\in (D_i)^u$ so that $a(M\vec{u})\in (C_i)^u$. Now $a(M\vec{u})=M(a\vec{u})$. Put $\vec{x}=a\vec{u}$. Then $M\vec{x}\in (C_i\cap (0,\epsilon))^u$. Choose an entry $i$ of $M\vec{u}$ and let $j=\gamma(i)$.

Let $\vec{z}=\left(\begin{array}{cc}a\vec{u}\\i\vec{y}\end{array}\right)$. We claim that for any row $\vec{w}$ of $\left(\begin{array}{cc}M & O\\O & N\end{array}\right)$, $\varphi(\vec{w}\cdot\vec{z})=j$. To observe this first assume that $\vec{w}$ is a row of $\left(\begin{array}{cc}M & O\end{array}\right)$, so that $\vec{w}={\vec{s}}^\frown\vec{0}$, where $\vec{s}$ is a row of $M$. Then $\vec{w}\cdot\vec{z}=\vec{s}\cdot (a\vec{u})=a(\vec{s}\cdot\vec{u})$. Therefore $\varphi(\vec{w}\cdot\vec{z})=\varphi(a(\vec{s}\cdot\vec{u}))=\gamma(\vec{s}\cdot\vec{u})=j$.

Next assume that $\vec{w}$ is a row of $\left(\begin{array}{cc}O & N\end{array}\right)$, so that $\vec{w}= \vec{0}^\frown\vec{s}$ where $\vec{s}$ is a row of $N$. Then $\vec{w}\cdot\vec{z}=i(\vec{s}\cdot\vec{y})$. Now $\psi(\vec{s}\cdot\vec{y})=\psi(a)$. So $\varphi(i(\vec{s}\cdot\vec{y}))=\varphi(ia)=\gamma(i)=j$.

\end{proof}

Now  we shall show how new type of infinite centrally image partition regular matrix can be constructed from old one (that is extended up to infinite order i.e. here up to $\omega$).\\
Henceforth unless otherwise stated $S$ will be considered as dense subsemigroup of  $((0,\infty),+)$ for which $cS$ is $central^{*}$ near zero for every $c\in \mathbb{N}$.\\
We now present the following theorem and corollary to prove the following proposition \ref{infinitecentralnear0}.

\begin{thm}\label{cardidempotent}

Let $S$ be a subsemigroup of $((0,\infty),+)$. Let $p\in
K(0^{+}(S))$, let $C\in p$, and let $R$ be the minimal right
ideal of $0^{+}(S)$ to which $p$ belongs. Then there are at
least countably infinitely many idempotents in $K(0^{+}(S))\cap
R\cap \overline{C}$.

\end{thm}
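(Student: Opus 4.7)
The plan is to adapt the well-known argument of Hindman--Strauss, which for a compact right topological semigroup $T$ and an element $q\in K(T)$ with $A\in q$ produces infinitely many idempotents in $K(T)\cap R(q)\cap \overline{A}$, to the semigroup $0^{+}(S)$. The proof splits into two stages: first producing one idempotent, then bootstrapping to countably many.

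\emph{Stage 1 (one idempotent).} The first step is to produce an idempotent $e_{1}\in K(0^{+}(S))\cap R\cap\overline{C}$. The key observations are that $R=p+0^{+}(S)$, since $R$ is the minimal right ideal containing $p$, and that the minimal left ideal $L$ containing $p$ is closed, being the image of $0^{+}(S)$ under the continuous map $\rho_{p}$ (together with $p$ itself). Hence $L\cap R$ is a group whose identity $u$ satisfies $u+p=p=p+u$, and from $u+p=p$ we see that $B:=\{x\in S:-x+C\in p\}$ is a member of $u$. I would then form a closed subsemigroup of $0^{+}(S)$ inside $\overline{C}\cap L$ by the usual star--set construction (ensuring closure under $+$ by descending to a subset of $B$ that is a member of $u$), apply the Ellis--Numakura idempotent lemma to extract an idempotent $e$, and check that this idempotent lands in $K(0^{+}(S))\cap R\cap\overline{C}$.

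\emph{Stage 2 (induction to countably many).} Given distinct idempotents $e_{1},\dots,e_{n}$ in $K(0^{+}(S))\cap R\cap\overline{C}$, one can choose pairwise disjoint sets $A_{0},A_{1},\dots,A_{n}\subseteq S$ with $A_{0}\in p$, $A_{0}\subseteq C$, and $A_{i}\in e_{i}$ for $i\geq 1$, since the ultrafilters $p,e_{1},\dots,e_{n}$ are pairwise distinct. Applying Stage~1 with $A_{0}$ in place of $C$ produces an idempotent $e_{n+1}\in K(0^{+}(S))\cap R\cap\overline{A_{0}}\subseteq K(0^{+}(S))\cap R\cap\overline{C}$. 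Because $A_{0}\in e_{n+1}$ but $A_{0}\cap A_{i}=\emptyset$ forces $A_{0}\notin e_{i}$, we get $e_{n+1}\neq e_{i}$ for every $i\leq n$, and the recursion yields countably many distinct idempotents.

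\emph{Main obstacle.} The delicate point is Stage~1: because the minimal right ideal $R$ is not known to be closed in $0^{+}(S)$ (left multiplication need not be continuous), one cannot cut $R$ out of a closed subsemigroup and apply Ellis--Numakura directly. The workaround routes through the closed left ideal $L$ and uses the group structure of $L\cap R$ to transfer a candidate idempotent back into $R$, while simultaneously ensuring that membership in $\overline{C}$ is preserved. Once Stage~1 is in place, Stage~2 is a straightforward bootstrap.
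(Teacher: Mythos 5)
The paper does not actually prove this theorem; it quotes it from De, Hindman, and Strauss \cite{refDHS09}, so the only thing to measure your proposal against is whether it would stand on its own. It would not: the premise on which Stage~1 rests is false. There is no ``well-known argument of Hindman--Strauss'' producing, for an arbitrary compact right topological semigroup $T$, an arbitrary $q\in K(T)$ and an arbitrary $A\in q$, idempotents of $K(T)$ in $\overline{A}$ --- that statement already fails for $T=\beta\mathbb{N}$, where being a member of some element of $K(\beta\mathbb{N})$ (piecewise syndeticity) is strictly weaker than being a member of a minimal idempotent (centrality). The whole point of \cite{refDHS09} is that $0^{+}(S)$ is one of the ``certain subsemigroups'' for which this implication, false in general, does hold; any correct proof must therefore use something specific to $0^{+}(S)$, and your sketch uses nothing beyond general compact right topological semigroup theory plus the closedness of minimal left ideals.

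The gap shows up concretely at the key step of Stage~1. From $p=u+p$ you correctly get $B=\{x\in S:-x+C\in p\}\in u$, but the star-set construction applied to $B$ and the idempotent $u$ produces a closed subsemigroup inside $\overline{B}$, not inside $\overline{C}$; indeed $u$ itself is already an idempotent of $K(0^{+}(S))\cap R$ with $B\in u$, and nothing in your construction forces $C$ (as opposed to $B$) into the resulting idempotent. Membership of $B$ in an ultrafilter carries no information about membership of $C$. The only bridge between $B$ and $C$ is that $x\in B$ iff $C\in x+p$, i.e.\ the translates $x+p$ (which use the \emph{continuous} maps $\lambda_x$ for $x\in S$) lie in $\overline{C}$; exploiting this requires choosing $x\in B$ arbitrarily close to $0$ so that $x+p$, while no longer in $0^{+}(S)$, has its limit points back in $0^{+}(S)\cap\overline{C}$ --- this is where density of $S$ and the definition of $0^{+}(S)$ as $\bigcap_{\epsilon>0}\overline{(0,\epsilon)\cap S}$ must enter, and it is entirely absent from your argument. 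Your identification of the non-closedness of $R$ as ``the main obstacle'' misses the real difficulty. Stage~2 (disjointifying members of finitely many distinct ultrafilters and re-running Stage~1 inside a smaller member of $p$) is a correct bootstrap, but only once a valid Stage~1 exists.
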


\begin{proof}

~\cite[Theorem 2.3]{refDHS09}

\end{proof}

\begin{cor}\label{infinitecentral}

Let $S$ be a dense subsemigroup of $((0,\infty),+)$ and let $C$ be a central set near zero. Then there exists a sequence $\langle C_{n} \rangle_{n=1}^{\infty}$ of pairwise disjoint central sets near zero in $S$ with $\bigcup _{n=1}^{\infty}C_{n}\subseteq C$.

\end{cor}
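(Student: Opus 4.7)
The plan is to build $\langle C_n\rangle$ inductively: at each stage I split off one central-near-zero piece from a shrinking central-near-zero remainder. The two ingredients will be Theorem~\ref{cardidempotent}, which guarantees at least two (in fact, infinitely many) distinct minimal idempotents of $0^+(S)$ containing any prescribed central-near-zero set, and the Hausdorffness of $\beta S_d$, which separates distinct ultrafilters by disjoint member sets.

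First I would set $D_0=C$ and maintain the inductive invariant that $D_{n-1}$ is a central set near zero. Given such a $D_{n-1}$, I would invoke Theorem~\ref{cardidempotent} (applied to any minimal idempotent $p\in 0^+(S)$ with $D_{n-1}\in p$) to extract two distinct minimal idempotents $q,q'\in 0^+(S)$ containing $D_{n-1}$. Hausdorffness then yields disjoint $A,A'\subseteq S$ with $A\in q$ and $A'\in q'$, and I would define $C_n=D_{n-1}\cap A$ together with $D_n=D_{n-1}\cap A'$. Both $C_n$ and $D_n$ are central near zero (as members of minimal idempotents in $0^+(S)$) and are disjoint subsets of $D_{n-1}$; the induction continues with $D_n$.

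Pairwise disjointness of the $C_n$ is then automatic: the $D_n$ form a decreasing chain, and $C_n$ is disjoint from $D_n$ by construction, so for $m<n$ we have $C_n\subseteq D_{n-1}\subseteq D_m$, which is disjoint from $C_m$. All of the $C_n$ sit inside $C=D_0$ and each is central near zero, so the corollary follows.

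The only point requiring attention is that the inductive step never breaks down: the remainder $D_n$ must itself be central near zero so that Theorem~\ref{cardidempotent} can be reapplied. This is immediate because $D_n\in q'$ with $q'$ a minimal idempotent of $0^+(S)$. Given the strength of Theorem~\ref{cardidempotent}, there is no real obstruction, and the corollary reduces to choosing the right pair of minimal idempotents at each stage.
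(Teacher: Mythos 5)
Your proposal is correct and coincides with the paper's own argument: the authors' proof explicitly offers, as a parenthetical alternative, exactly your iterative splitting of $C$ into $C_1$ and $D_1$, then $D_1$ into $C_2$ and $D_2$, and so on, using two distinct minimal idempotents in $\overline{C}$ at each stage. Your write-up simply supplies the details (the appeal to Theorem~\ref{cardidempotent} for two idempotents, Hausdorff separation, and the decreasing-chain disjointness check) that the paper leaves implicit.
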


\begin{proof}

By the above Theorem  ~\ref{cardidempotent},  there are at
least countably infinitely many idempotents in $\overline{C}$.  hence contains an infinite strongly discrete subset. (Alternatively, there are two minimal idempotents in $\overline{C}$ so that $C$ can be split into two central sets near zero, $C_{1}$ and $D_{1}$. Then $D_{1}$ can be split into two central sets near zero, $C_{2}$ and $D_{2}$, and so on.)

\end{proof}

\begin{prop}\label{infinitecentralnear0}

For each $n\in \mathbb{N}$, let $M_{n}$ be a {\em centrally image partition regular near zero  \/} matrix. Then the matrix
$$M=\left(\begin{array}{cccc}M_{1}&0&0&\ldots\\
0&M_{2}&0&\ldots\\
0&0&M_{3}&\ldots\\
\vdots&\vdots&\vdots&\ddots
\end{array}\right)\,.
$$
is also {\em centrally image partition regular near zero  \/}.

\end{prop}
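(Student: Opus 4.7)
My approach would be direct and reduce the problem to applying the hypothesis block by block, using Corollary \ref{infinitecentral} to coordinate the selections across the countably many blocks.

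First I would fix a central set $C$ near zero in $S$ and the goal of producing some $\vec{x}\in S^\omega$ with $M\vec{x}\in C^\omega$. The key structural observation about the block-diagonal matrix $M$ is that each row of $M$ has all of its nonzero entries sitting inside exactly one of the diagonal blocks $M_n$; that is, if we think of indices as pairs $(n,i)$, a row of $M$ coming from block $n$ pairs nontrivially only with coordinates of $\vec{x}$ belonging to block $n$. So it suffices to find, for each $n\in\mathbb{N}$, a vector $\vec{x}_n \in S^\omega$ such that $M_n\vec{x}_n \in C^\omega$ and then concatenate the $\vec{x}_n$'s into one infinite vector $\vec{x}$.

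To produce the $\vec{x}_n$'s, I would apply Corollary \ref{infinitecentral} to the central set $C$ near zero, obtaining a pairwise disjoint sequence $\langle C_n\rangle_{n=1}^{\infty}$ of central sets near zero in $S$ with $\bigcup_{n=1}^{\infty} C_n \subseteq C$. Since each $M_n$ is centrally image partition regular near zero by hypothesis, we may then pick $\vec{x}_n\in S^\omega$ with $M_n\vec{x}_n \in C_n^\omega \subseteq C^\omega$. (Strictly speaking, disjointness is not needed to conclude $M\vec{x}\in C^\omega$; invoking Corollary \ref{infinitecentral} is a clean way to emphasize that the construction in each block can be made independently and that we have plenty of room inside $C$.)

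Finally, define $\vec{x}\in S^\omega$ by letting its $(n,i)$-th entry be the $i$-th entry of $\vec{x}_n$. By the block structure noted above, the entry of $M\vec{x}$ corresponding to the $j$-th row of the $n$-th block equals the $j$-th entry of $M_n\vec{x}_n$, which lies in $C_n\subseteq C$. Hence $M\vec{x}\in C^\omega$, completing the argument. There is no real obstacle here beyond a careful indexing argument; the entire content is (i) the already established Corollary \ref{infinitecentral} and (ii) the definitional fact that CIPR/$S_0$ lets us solve each diagonal block separately.
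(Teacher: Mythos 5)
Your proposal is correct and matches the paper's own argument essentially verbatim: apply Corollary \ref{infinitecentral} to split $C$ into pairwise disjoint central sets near zero $C_n$, solve each block $M_n$ inside $C_n$, and concatenate. Your parenthetical remark that disjointness is not actually needed here is accurate, but otherwise there is nothing to distinguish the two proofs.
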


\begin{proof}

Let $C$ be a central sets near zero and choose by the above Corollary  ~\ref{infinitecentral} a sequence $\langle C_{n} \rangle_{n=1}^{\infty}$ of pairwise disjoint central sets near zero in $S$ with $\bigcup _{n=1}^{\infty}C_{n}\subseteq C$. For each $n\in \mathbb{N}$ choose $\vec x^{(n)}\in S^{\omega}$ such that $\vec y^{(n)}=M_{n}\vec x^{(n)}\in C_{n}^{\omega}$. Let  $$\vec z =\left(\begin{array}{c}\vec x^{(1)}\\
\vec x^{(2)}\\
\vdots
\end{array}\right)\,.
$$
Then all entries of $M\vec z$ are in $C$.

\end{proof}

\section{Some infinite Centrally image partition regularity of matrices near zero}
We now present a class of image partition regular matrices which are called segmented image partition regular matrices introduced in \cite{refHLi}. And we show that these class of matrices are also infinite centrally image partition regular matrices.

\begin{defn}
Let $M$ be an $\omega\times\omega$ matrix with entries from $\mathbb{Q}$. Then $M$ is a segmented image partition regular matrix if and only if
\begin{enumerate}
\item no row of $M$ is row is $\vec0$;
\item for each $i\in \omega$, $\{j\in \omega : a_{i,j}\neq\emp\}$ is finite; and
\item there is an increasing sequence $\langle\alpha_{n}\rangle_{n=0}^{\infty}$ in $\omega$ such that $\alpha_{0}=0$ and for each $n\in \omega$,\\
$\{\langle a_{i,\alpha_{n}},a_{i,\alpha_{n}+1},a_{i,\alpha_{n}+2},\ldots,a_{i,\alpha_{n+1}-1}\rangle : i\in \omega\}\setminus \{\vec 0\}$\\
 is empty or is the set of rows of a finite image partition regular matrix.

\end{enumerate}
\end{defn}
If each of these finite image partition regular matrices is a first entries matrix, then $M$ is a segmented first entries matrix. If also the first nonzero entry of each $\langle a_{i,\alpha_{n}},a_{i,\alpha_{n}+1},a_{i,\alpha_{n}+2},\ldots,a_{i,\alpha_{n+1}-1}\rangle$, if any, is 1, then $M$ is a monic segmented first entries matrix.

\begin{thm}
Let $S$ be a dense subsemigroup of $((0,\infty),+)$ and let $M$ be a segmented image partition regular matrix with $\omega$. Then $M$ is  \emph{centrally image partition regular near zero}.

\end{thm}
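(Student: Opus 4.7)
The plan is to exploit the block decomposition built into the definition: on the columns indexed by $[\alpha_n,\alpha_{n+1})$, the matrix $M$ exhibits (up to repetition of rows) a finite image partition regular matrix $B_n$ whose distinct nonzero rows I list as $B_n^{(1)},\ldots,B_n^{(u_n)}$; moreover, condition (2) ensures that each row of $M$ has nonzero entries in only finitely many blocks. I will build $\vec x$ block by block, using the ultrafilter arithmetic at a minimal idempotent of $0^+(S)$, so that every finite partial sum involving at most one segment per block lies in $C$.

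Let $C$ be central near zero, fix a minimal idempotent $p$ in $0^+(S)$ with $C\in p$, and set $C^{\star}=\{x\in C:-x+C\in p\}$. Then $C^{\star}\in p$ and $x\in C^{\star}$ implies $-x+C^{\star}\in p$, by standard filter arithmetic. I construct $\vec x^{(n)}\in S^{\alpha_{n+1}-\alpha_n}$ by induction on $n$, maintaining the invariant that, writing $b_{n,j}=B_n^{(j)}\vec x^{(n)}$, every sum
\[
\sigma_{F,f}:=\sum_{k\in F}b_{k,f(k)},\qquad F\subseteq\{1,\ldots,n\}\ \text{finite},\ f(k)\in\{1,\ldots,u_k\},
\]
lies in $C^{\star}$.

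At stage $n$, set
\[
C^{(n)}=C^{\star}\cap\bigcap_{F,\,f}\bigl(-\sigma_{F,f}+C^{\star}\bigr),
\]
with $F$ ranging over finite subsets of $\{1,\ldots,n-1\}$ and $f$ over index selections as above. By the invariant at stage $n-1$ each such $\sigma_{F,f}$ lies in $C^{\star}$, so every translate lies in $p$; since only finitely many blocks have been processed and each $u_k$ is finite, the intersection is finite and $C^{(n)}\in p$, hence $C^{(n)}$ is central near zero. Finite image partition regular matrices have images in every central set near zero of $S$ (the natural near-zero analog of Theorem \ref{iprfinitech}(b), implicit in the $0^+(S)$-theory of \cite{refHL} and \cite{refDH} and used at several points earlier in this paper), so one can choose $\vec x^{(n)}\in S^{\alpha_{n+1}-\alpha_n}$ with $B_n\vec x^{(n)}\in\bigl(C^{(n)}\bigr)^{u_n}$. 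The invariant is preserved at stage $n$, because each new $b_{n,j}\in C^{(n)}$ lies in $C^{\star}$ and satisfies $b_{n,j}+\sigma_{F,f}\in C^{\star}$ for every prior $F,f$.

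Concatenation gives $\vec x\in S^\omega$. For each row $i$ of $M$, condition (2) supplies a finite set $F_i=\{n:A_n^{(i)}\neq\vec 0\}$ and, for $n\in F_i$, an index $f_i(n)\in\{1,\ldots,u_n\}$ with $A_n^{(i)}=B_n^{(f_i(n))}$; hence
\[
(M\vec x)_i=\sum_{n\in F_i}A_n^{(i)}\vec x^{(n)}=\sum_{n\in F_i}b_{n,f_i(n)}=\sigma_{F_i,f_i}\in C^{\star}\subseteq C,
\]
as required. The chief obstacle is to keep the intersection defining $C^{(n)}$ finite at every stage so that $C^{(n)}$ remains in $p$; this is secured jointly by the finiteness of each $u_k$ and by the fact that only $n-1$ blocks have been treated at stage $n$. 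The only other input needed is the near-zero case of centrally image partition regularity for finite matrices, which is the natural analog of Theorem \ref{iprfinitech}(b) in the $0^+(S)$ setting.
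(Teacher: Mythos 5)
Your proposal is correct and follows essentially the same route as the paper: the same column-block decomposition into finite image partition regular pieces, the same use of $C^{\star}=\{x\in C:-x+C\in p\}$ for a minimal idempotent $p\in 0^{+}(S)$, and the same block-by-block induction in which the next block's image is placed inside a finite intersection of translates $-\sigma+C^{\star}$, invoking the near-zero central-sets theorem for finite matrices. Your invariant (all selection sums over at most one segment per block) is a mildly stronger, cleaner reformulation of the paper's invariant on the entries of the partial products $B_m\vec x$, but the argument is the same.
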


\begin{proof}

Let $\vec c_{0}, \vec c_{1}, \vec c_{2},\ldots$ denote the columns of $M$. Let $\langle\alpha_{n}\rangle_{n=0}^{\infty}$ be as in the definition of a segmented image partition regular matrix. For each $n\in \omega$, let $M_{n}$ be the matrix whose columns are $\vec c_{\alpha_{n}},\vec c_{\alpha_{n}+1},\ldots,\vec c_{\alpha_{n+1}-1}$. Then the set of non-zero rows of $M_{n}$ is finite and, if nonempty, is the set of rows of a finite image partition regular matrix. Let $B_{n} = (M_{0}$  $M_{1}\ldots M_{n})$.

Now by ~\cite[Lemma 2.5]{refHL}
 $0^{+}(S)$ is a compact right topological semigroup so that we can choose an minimal idempotent $p\in 0^{+}(S)$. Let $C\subseteq S$
such that $C\in p$. Let $C^{*}=\{x\in C : -x+C\in p\}$. Then $C^{*}\in p$ and, for every $x\in C^{*}$, $-x+C^{*}\in p$ by ~\cite[Lemma 4.14]{refHS}.\\
Now the set of non-zero rows of $M_{n}$ is finite and, if nonempty, is the set of rows of a finite image partition regular matrix over $\mathbb{N}$ and hence by ~\cite[Theorem 2.3]{refDH} $IPR/S_{0}$. Then by ~\cite[Theorem 4.10]{refDH} , we can choose $\vec x^{(0)}\in S^{\alpha_{1}-\alpha_{0}}$ such that, if $\vec y=M_{0}\vec x^{(0)}$, then $y_{i}\in C^{*}$ for every $i\in \omega$ for which the $i^{th}$ row of $M_{0}$ is non-zero.\\
We now make the inductive assumption that, for some $m\in \omega$, we have chosen $\vec x^{(0)},\vec x^{(1)},\ldots,\vec x^{(1)}$ such that $\vec x^{(i)}\in S^{\alpha_{i+1}-\alpha_{i}}$ for every $i\in \{0,1,2,\ldots,m\}$, and, if

$$\vec y=B_{m}\left(\begin{array}{c}\vec x^{(0)}\\
\vec x^{(1)}\\ . \\.\\.\\\vec x^{(m)}\end{array}\right),$$

then $y_{j}\in C^{*}$ for every $j\in \omega$ for which the $j^{th}$ row of $B_{m}$ is non-zero.\\
Let $D=\{j\in \omega$ : row $j$ of $B_{m+1}$ is not $\vec 0\}$ and note that for each $j\in \omega, -y_{j}+C^{*}\in p$. (Either $y_{j}=0$ or $y_{j}\in C^{*}$) By  ~\cite[Theorem 4.10]{refDH} we can choose $\vec x^{(m+1)}\in S^{\alpha_{m+2}-\alpha_{m+1}}$ such that, if $\vec z=M_{m+1}\vec x^{(m+1)}$, then $z_{j}\in \bigcap _{t\in D}(-y_{t}+C^{*})$ for every $j\in D$.\\

Thus we can choose an infinite sequence $\langle \vec x^{(i)} \rangle_{i\in \omega}$ such that, for every $i\in \omega$, $\vec x^{(i)}\in S^{\alpha_{i+1}-\alpha_{i}}$, and, if

$$\vec y=B_{i}\left(\begin{array}{c}\vec x^{(0)}\\
\vec x^{(1)}\\ . \\.\\.\\\vec x^{(i)}\end{array}\right),$$

then $y_{j}\in C^{*}$ for every $j\in \omega$ for which the $j^{th}$ row of $B_{i}$ is non-zero.\\

Let $$\vec x=\left(\begin{array}{c}\vec x^{(0)}\\
\vec x^{(1)}\\ \vec x^{(2)}\\\vdots \end{array}\right)$$

and let $\vec y=M\vec x$. We note that, for every $j\in \omega$, there exists $m\in \omega$ such that $y_{j}$ is the $j^{th}$ entry of

$$B_{i}\left(\begin{array}{c}\vec x^{(0)}\\
\vec x^{(1)}\\ . \\.\\.\\\vec x^{(i)}\end{array}\right)$$

whenever $i>m$. Thus all the entries of $\vec y$ are in $C^{*}$.

\end{proof}



\bibliographystyle{amsplain}

\begin{thebibliography}{10}







\bibitem{refDH} D. De and N. Hindman,  \textit{Image partition
    regularity near zero\/}, Discrete Mathematics \textbf{309} (2009), 3219-3232.


\bibitem{refDHS09} D. De, N. Hindman, and D. Strauss, \textit{Sets Central with Respect to Certain Subsemigroups of $\beta S_d$}, Topology Proceedings, Volume 33, 2009, Pages 55-79.


\bibitem{refDPu}D. De and R. K. Paul,\textit{ Universally image
    partition regular matrices\/}, The Electronic journal of
    combinatorics \textbf{15} (2008), \#R141.


\bibitem{refDPr}D. De and R. K. Paul, \textit{Image partition regularity of matrices near 0 with real entries}, New York J.Math. \textbf{17} (2011), 149-161.


\bibitem{refDHLL} W. Deuber, N. Hindman, I. Leader, and H.
    Lefmann, \textit{Infinite partition regular matrices\/}, Combinatorica
    {\bf 15} (1995), 333-355.




\bibitem{refF} H. Furstenberg,  \textit{Recurrence in ergodic
    theory and combinatorial number theory\/}, Princeton
    University Press, Princeton, 1981.







\bibitem{refH} N. Hindman, \textit{Image partition regularity
    over the reals\/}, New York J.Math. \textbf{9}(2003), 79-91.




\bibitem{refHLi} N. Hindman and I. Leader, \textit{Image partition regularity of matrices\/},
 Comb.\ Prob.\ and Comp.\ {\bf 2} (1993), 437-463.


\bibitem{refHL}  N. Hindman, I. Leader, \textit{The semigroup of
    ultrafilters near zero\/}, Semigroup Forum \textbf{59}
    (1999), 33-55.


\bibitem{refHLSb}  N. Hindman, I. Leader and D. Strauss,
    \textit{Image Partition Regular Matrices - Bounded
    Solutions and Preservation of Largeness \/}, Discrete
    Mathematics \textbf{242} (2002), 115-144.


\bibitem{refHLSc} N. Hindman, I. Leader, and D. Strauss,
    \textit{Infinite partition regular matrices -- solutions in central
    sets\/}, Trans.\ Amer.\ Math.\ Soc.\ {\bf 355} (2003),
    1213-1235.




\bibitem{refHS} N.\ Hindman and D.\ Strauss, \textit{Algebra
    in the Stone-\v Cech compactification: theory and
    applications\/}, de Gruyter, Berlin, 1998.


\bibitem{refHSi} N.\ Hindman and D.\ Strauss, \textit{Infinite
    partition regular matrices, II-extending the finite results\/}, Topology Proceedings \textbf{25} (2000), 217-255.


\bibitem{refHSzqr} N.\ Hindman and D.\ Strauss, \textit{Image partition regularity
    over the integers, rationals and reals\/}, New York J.Math. \textbf{11}(2005), 519-538.



\bibitem{refM} K. Milliken, \textit{Ramsey's Theorem with sums or unions\/}, J.
Comb.\ Theory (Series A) {\bf 18} (1975), 276-290.



\bibitem{refT} A. Taylor, \textit{A canonical partition relation for finite
subsets of $\omega$\/}, J. Comb.\ Theory (Series A) {\bf 21}
(1976), 137-146.



\bibitem{refWd} K. Williams, \textit{Separating Milliken-Taylor
    Systems and Variations Thereof in the Dyadics and the Stone-\v Cech Compactification
    of $\mathbb{N}$\/}, Ph.D. Dissertation (2010), Howard University.

\end{thebibliography}

\end{document}